\setlist[enumerate]{nosep}
\definecolor{labelkey}{rgb}{0,0.08,0.45}
\definecolor{refkey}{rgb}{0,0.6,0.0}
\definecolor{Brown}{rgb}{0.45,0.0,0.05}
\definecolor{lime}{rgb}{0.00,0.8,0.0}
\definecolor{lblue}{rgb}{0.5,0.5,0.99}
\newcommand{\seppone}{\setlength{\itemsep}{-1pt}}
\newcommand{\nnn}{\ensuremath{{n\in{\mathbb N}}}}
\newcommand{\menge}[2]{\big\{{#1}~\big |~{#2}\big\}}
\newcommand{\To}{\ensuremath{\rightrightarrows}}
\newcommand{\fenv}[1]%
{\ensuremath{\,\overrightarrow{\operatorname{env}}_{#1}}}
\newcommand{\benv}[1]%
{\ensuremath{\,\overleftarrow{\operatorname{env}}_{#1}}}
\newcommand{\scal}[2]{\left\langle{#1},{#2}  \right\rangle}
\newcommand{\RR}{\ensuremath{\mathbb R}}
\newcommand{\RP}{\ensuremath{\mathbb{R}_+}}
\newcommand{\RPP}{\ensuremath{\mathbb{R}_{++}}}
\newcommand{\RM}{\ensuremath{\mathbb{R}_-}}
\newcommand{\dom}{\ensuremath{\operatorname{dom}}}
\newcommand{\ran}{\ensuremath{\operatorname{ran}}}
\newcommand{\zer}{\ensuremath{\operatorname{zer}}}
\newcommand{\Fix}{\ensuremath{\operatorname{Fix}}}
\newcommand{\Id}{\ensuremath{\operatorname{Id}}}
\newtheorem{theorem}{Theorem}[section]
\newtheorem{lem}[theorem]{Lemma}
\newtheorem{proposition}[theorem]{Proposition}
\newtheorem{prop}[theorem]{Proposition}
\newtheorem{remark}[theorem]{Remark}
\providecommand{\norm}[1]{\lVert#1\rVert}
\providecommand{\normsq}[1]{\lVert#1\rVert^2}
\providecommand{\innp}[1]{\langle#1\rangle}
\providecommand{\iNNp}[1]{\Big\langle#1\Big\rangle}
\providecommand{\RR}{\mathbb{R}}
\providecommand{\ran}{\operatorname{ran}}
\providecommand{\dom}{\operatorname{dom}}
\providecommand{\gra}{\operatorname{gra}}
\providecommand{\Id}{\operatorname{{ Id}}}
\providecommand{\To}{\rightrightarrows}
\providecommand{\ran}{\operatorname{ran}}
\providecommand{\Id}{\operatorname{Id}}
\providecommand{\spn}{\operatorname{span}}
\providecommand{\sinc}{\operatorname{sinc}}
\providecommand{\zer}{\operatorname{zer}}
\providecommand{\RR}{\mathbb{R}}
\definecolor{myblue}{rgb}{.8, .8, 1}
  \newcommand*\mybluebox[1]{%
    \colorbox{myblue}{\hspace{1em}#1\hspace{1em}}}
\begin{document}

\title{ \textsc
Intriguing maximally monotone operators derived \\ from 
nonsunny nonexpansive retractions}

\author{
Heinz H.\ Bauschke\thanks{
Mathematics, University
of British Columbia,
Kelowna, B.C.\ V1V~1V7, Canada. E-mail:
\texttt{heinz.bauschke@ubc.ca}.},~~
Levi Miller\thanks{E-mail: \texttt{levi@levimiller.ca}.},
~~and~ Walaa M.\ Moursi\thanks{
Electrical Engineering,
Stanford 
University, 
Stanford, CA 94305, USA
and 
Mansoura University, Faculty of Science, Mathematics Department, 
Mansoura 35516, Egypt. E-mail:
\texttt{wmoursi@stanford.edu}.
}}

\date{May 22, 2018}

\maketitle

\centerline{\large \emph{Dedicated to Simeon Reich on the
occasion of his 70th
Birthday}}

\begin{abstract}
\noindent
Monotone operator theory and fixed point theory for nonexpansive
mappings are central areas in modern nonlinear analysis and
optimization. Although these areas are fairly well developed, 
almost all examples published are based on 
subdifferential operators, linear relations, or combinations
thereof. 

In this paper, we construct an intriguing maximally monotone
operator induced by a certain nonexpansive retraction.
We analyze this operator, which does not appear to be assembled
from subdifferential operators or linear relations, in some detail. 
Particular
emphasis is placed on duality and strong monotonicity. 
\end{abstract}
{ 
\noindent
{\bfseries 2010 Mathematics Subject Classification:}
{Primary 
47H05, 
47H09; 
Secondary 
90C25. 
}

\noindent {\bfseries Keywords:}
firmly nonexpansive mapping, 
maximally monotone operator,
nonexpansive mapping,
paramonotone operator, 
resolvent, 
resolvent average, 
retraction,
strongly monotone operator,
}

\section{Introduction}

Suppose that 
\begin{empheq}[box=\mybluebox]{equation}
\text{$X$ is a real Hilbert space,}
\end{empheq}
with inner product $\scal{\cdot}{\cdot}$ and induced norm
$\|\cdot\|$. 
Throughout, we assume that 
$X\neq \{0\}$ and that 
\begin{empheq}[box=\mybluebox]{equation}
e\in X \text{~and $\|e\|\leq 1$.}
\end{empheq}
Now define 
\begin{empheq}[box=\mybluebox]{equation}
\label{eq:def:Re}
R_e \colon X\to X\colon x\mapsto \|x\|\cdot e.
\end{empheq}

The mapping $R_e$ is nonexpansive and thus induces
an associated firmly nonexpansive mapping $T_e$ as well as
a maximally monotone operator $A_e$.

\emph{The goal of this paper is to present fundamental properties
of $R_e$, $T_e$, and $A_e$. These operators are neither
subdifferential operators nor linear relations; consequently, they provide
a new testing ground for properties in monotone operator
theory and fixed point theory.}

The paper is organized as follows.
In Section~\ref{sec:ReTeAe}, we focus on $R_e$, the induced
firmly nonexpansive mapping $T_e$ and the maximally monotone
operator $A_e$. 
Duality and stronger notions of monotonicity are considered in
Section~\ref{sec:Du}. 
We conclude the paper in Section~\ref{sec:last} with a discussion
on the resolvent iteration and the resolvent average. 

The notation employed is standard and follows, e.g., \cite{BC2017}. 
Finally, we assume the reader is familiar with basic monotone operator
theory and fixed point theory, as can be found in e.g., 
\cite{BC2017}, 
\cite{Brezis}, 
\cite{BurIus},
\cite{GK},
\cite{GR}, 
\cite{Rock98}, 
\cite{Simons2},
\cite{Simons1},
\cite{Zalinescu},
\cite{Zeidler1},
\cite{Zeidler2a},
or 
\cite{Zeidler2b}.

\section{$R_e$, $T_e$, and $A_e$}
\label{sec:ReTeAe}

We start by collecting some properties of $R_e$.
Item~\ref{p:retraction:i} of the following result states that
$\RP\cdot e$ is a nonexpansive retract of $X$,
via $R_e$. (See \cite{KR} for more on nonexpansive retracts.)

\begin{proposition}
\ 
\label{p:retraction}
\begin{enumerate}
\item 
\label{p:retraction:i}
If $\|e\|=1$, then $R_e$ is a nonexpansive
retraction of $\RP\cdot e$ and $R_e$ is not a Banach contraction. 
\item 
\label{p:retraction:iii}
If $\|e\|<1$, then $R_e$ is a Banach
contraction with optimal Lipschitz constant $\|e\|$ and $\Fix R_e=\{0\}$. 
\item 
\label{p:retraction:ii}
If $0<\|e\|\leq 1$ 
and $f\in\{e\}^\perp$ with $\|f\|=1$, then
$R_e$ is not sunny. 
\item 
\label{p:retraction:iv}
$R_e$ is nonexpansive. 
\end{enumerate}
\end{proposition}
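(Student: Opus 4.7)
The plan is to organize the four parts around the single algebraic identity
\[
R_e(x)-R_e(y)=(\|x\|-\|y\|)e,\qquad\text{hence}\qquad \|R_e(x)-R_e(y)\|=\bigl|\|x\|-\|y\|\bigr|\cdot\|e\|,
\]
which, combined with the reverse triangle inequality $|\|x\|-\|y\||\le\|x-y\|$ and the standing hypothesis $\|e\|\le 1$, immediately yields part~\ref{p:retraction:iv}. I would in fact prove~\ref{p:retraction:iv} first, since~\ref{p:retraction:i} and~\ref{p:retraction:iii} both reuse this estimate.

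For~\ref{p:retraction:i}, assume $\|e\|=1$. If $x=\lambda e$ with $\lambda\ge 0$, then $R_e(x)=\lambda\|e\|\,e=\lambda e=x$, so $R_e$ is a retraction onto $\RP\cdot e$; nonexpansiveness follows from~\ref{p:retraction:iv}. To see that no constant $<1$ works, pick $x=e$ and $y=0$: then $\|R_e(x)-R_e(y)\|=\|e\|=1=\|x-y\|$, so the optimal Lipschitz constant is exactly $1$.

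For~\ref{p:retraction:iii}, the displayed identity with $\|e\|<1$ gives $\|R_e(x)-R_e(y)\|\le\|e\|\cdot\|x-y\|$; testing again with $x=e$, $y=0$ yields equality with ratio $\|e\|$, so $\|e\|$ is optimal. For the fixed point set, $R_e(x)=x$ reads $\|x\|\,e=x$, forcing $x$ to be a nonnegative multiple of $e$, say $x=\lambda e$ with $\lambda\ge0$; substituting gives $\lambda\|e\|=\lambda$, whence $\lambda=0$ since $\|e\|<1$, so $\Fix R_e=\{0\}$.

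The main subtlety is part~\ref{p:retraction:ii}, since the notion of sunny retraction must be applied carefully given that, when $\|e\|<1$, $R_e$ is not a retraction onto $\RP\cdot e$. I would use the standard definition: $R_e$ is sunny provided $R_e\bigl(R_e(x)+t(x-R_e(x))\bigr)=R_e(x)$ for every $x\in X$ and every $t\ge 0$ for which $R_e(x)+t(x-R_e(x))$ remains in the domain (here, all of $X$). The plan is to probe this with the orthogonal direction $f$: since $f\perp e$ and $\|f\|=1$, one has $R_e(f)=\|f\|\,e=e$, and therefore
\[
R_e(f)+t\bigl(f-R_e(f)\bigr)=(1-t)e+tf,\qquad \bigl\|(1-t)e+tf\bigr\|=\sqrt{(1-t)^{2}\|e\|^{2}+t^{2}}.
\]
Applying $R_e$ gives $\sqrt{(1-t)^{2}\|e\|^{2}+t^{2}}\cdot e$, which must equal $R_e(f)=e$; that forces $(1-t)^{2}\|e\|^{2}+t^{2}=1$ for every $t\ge 0$. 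Taking $t=2$ produces $\|e\|^{2}+4=1$, a contradiction since $\|e\|>0$. Hence $R_e$ fails to be sunny, completing the proof.
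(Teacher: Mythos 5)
Your proof is correct and takes essentially the same route as the paper: the identity $\|R_ex-R_ey\|=\bigl|\|x\|-\|y\|\bigr|\cdot\|e\|$ together with the reverse triangle inequality drives the nonexpansiveness, Lipschitz--optimality, and fixed-point claims exactly as in the paper's argument (the paper tests optimality with the equality $\|R_ex-R_ey\|=\|e\|\,\|x-y\|$ on the ray $\RP\cdot e$, which subsumes your choice $x=e$, $y=0$). The only difference is the witness for non-sunniness: you probe the definition at $x=f$ with $t=2$ (extrapolating beyond $x$ along the ray from $R_ef$), whereas the paper takes $x=\|e\|^{-1}e+\sqrt{3}f$ and the midpoint $t=\tfrac12$ of $[R_ex,x]$; both are legitimate instantiations of the sunny condition, and yours is computationally a bit lighter.
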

\begin{proof}
We have 
\begin{equation}
\label{e:0509a}
(\forall x\in X)(\forall y\in X)\quad 
\|R_ex-R_ey\| = \big\| \|x\|e-\|y\|e\big\|
=\big|\|x\|-\|y\|\big|\cdot\|e\|
\leq \|e\|\cdot \|x-y\|,
\end{equation}
which shows that $R_e$ is Lipschitz continuous with constant
$\|e\|$; moreover, 
\begin{equation}
\label{e:0509aa}
(\forall x\in \RP\cdot e)(\forall y\in \RP\cdot e)\quad 
\|R_ex-R_ey\| = \|e\|\cdot \|x-y\|.
\end{equation}
It is clear that
$\Fix R_e \subseteq \ran R_e = \RP e$. 
If $x\in\RP\cdot e$, say $x=\rho e$, where $\rho\in\RP$,
then $R_e x =\|x\|e=\|\rho e\|e = \rho\|e\|e = \|e\|x$.
Hence 
\begin{equation}
\label{e:0509b}
(\forall x\in X)\quad x=R_e x \;\Leftrightarrow\; [x=0 \text{~or~} \|e\|=1].
\end{equation}

\ref{p:retraction:i}:
In view of \eqref{e:0509a} and the assumption that $\|e\|=1$,
it is clear that $R_e$ is nonexpansive.
From \eqref{e:0509b} we deduce that 
$\Fix R_e = \RP\cdot e$; thus, $R_e$ is a nonexpansive retract of
$\RP\cdot e$. 

\ref{p:retraction:iii}:
Combine \eqref{e:0509a} with \eqref{e:0509b}. 

\ref{p:retraction:ii}:
Set $x=\|e\|^{-1}e+\sqrt{3} f$. 
Then $\|x\|^2=1+3=4$ and so $R_e x =\|x\|e=2e$. 
Now consider $y=(x+R_e x)/2 =
2^{-1}(2+\|e\|^{-1})e+2^{-1}\sqrt{3}f\in [x,R_e x ]$. 
Then 
\begin{equation}
\|y\|^2 = \big(2^{-1}(2+\|e\|^{-1})\big)^2\|e\|^2 +
\big(2^{-1}\sqrt{3}\big)^2 
= 1+\|e\|+\|e\|^2 \leq 3 < 4 = \|x\|^2.
\end{equation}
We deduce that $R_e x=\|x\|e \neq \|y\|e = R_e y =\|y\|e$ 
and thus $R_e$ is not sunny. 

\ref{p:retraction:iv}:
Combine \ref{p:retraction:i} and \ref{p:retraction:iii}. 
\end{proof}

Proposition~\ref{p:retraction}\ref{p:retraction:iv} shows that
\begin{empheq}[box=\mybluebox]{equation}
\label{eq:def:Te}
T_e\colon X\to X\colon x\mapsto \tfrac{1}{2}x+\tfrac{1}{2}R_ex =
\tfrac{1}{2}x+\tfrac{1}{2}\|x\|e
\end{empheq}
is firmly nonexpansive and hence the resolvent $J_{A_e} =
(\Id+A_e)^{-1}$ of the maximally monotone operator
\begin{empheq}[box=\mybluebox]{equation}
\label{eq:def:Ae}
A_e = T_e^{-1}-\Id.
\end{empheq}

Our next task is to provide an explicit formula for $A_e$. 
It turns out that $A_e$ behaves quite differently, depending on
whether $\|e\|=1$ or $\|e\|<1$.

\begin{theorem}[{\bf $A_e$ for $\|e\|=1$}]
Suppose that $\|e\|=1$. Then
the maximally monotone operator $A_e$ is given by 
\begin{equation}
\label{e:0515a}
(\forall x\in X)\quad A_e x  = \begin{cases}
x-\frac{\|x\|^2}{\scal{e}{x}}\cdot e, &\text{if
$\scal{e}{x}>0$;}\\
\RM\cdot e, &\text{if $x=0$;}\\
\varnothing, &\text{otherwise.}
\end{cases}
\end{equation}
\end{theorem}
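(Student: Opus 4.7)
The plan is to compute the resolvent $J_{A_e} = T_e$ explicitly by inverting $T_e$ and then use $A_e = T_e^{-1} - \Id$. So I would fix $y \in X$ and solve the equation $T_e x = y$, which by \eqref{eq:def:Te} reads
\begin{equation*}
\tfrac{1}{2}x + \tfrac{1}{2}\|x\|e = y, \quad\text{equivalently}\quad x = 2y - \|x\|e.
\end{equation*}
Setting $t := \|x\| \geq 0$, any solution must satisfy $x = 2y - te$ and therefore $t^2 = \|2y - te\|^2$. Expanding the square and using $\|e\|^2 = 1$, the $t^2$ terms cancel and I obtain the scalar equation $4\|y\|^2 = 4t\scal{e}{y}$, i.e.\ $\|y\|^2 = t\scal{e}{y}$.

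Next I split into cases based on $y$. If $y = 0$, the scalar equation holds for every $t\geq 0$, so $T_e^{-1}(0) = \menge{-te}{t\in\RP} = \RM\cdot e$. If $y\neq 0$ and $\scal{e}{y}\leq 0$, the left-hand side $\|y\|^2$ is strictly positive while the right-hand side is nonpositive, so $T_e^{-1}(y) = \varnothing$. If $\scal{e}{y}>0$, then $t = \|y\|^2/\scal{e}{y}$ is the unique candidate; it is nonnegative, and a quick check shows that $x := 2y - te$ indeed satisfies $\|x\| = t$ (this is exactly where the quadratic reduction is reversible), giving a unique preimage.

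Finally, using $A_e = T_e^{-1} - \Id$, I translate each case: for $\scal{e}{y}>0$,
\begin{equation*}
A_e y = (2y - te) - y = y - \frac{\|y\|^2}{\scal{e}{y}}\, e;
\end{equation*}
for $y = 0$, $A_e 0 = \RM\cdot e - 0 = \RM\cdot e$; otherwise $A_e y = \varnothing$. Renaming $y$ as $x$ yields \eqref{e:0515a}. Maximal monotonicity of $A_e$ is already granted by the firm nonexpansiveness of $T_e$ from \eqref{eq:def:Te}–\eqref{eq:def:Ae}, so no further argument is needed there.

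There is really no serious obstacle; the only mildly delicate point is the $y=0$ case, where one must recognize that the scalar equation $\|y\|^2 = t\scal{e}{y}$ becomes $0 = 0$ and therefore admits the whole ray $t \in \RP$, producing the multi-valued image $\RM\cdot e$ at the origin. Verifying that no spurious solutions arise when passing from $t^2 = \|2y - te\|^2$ back to $x = 2y - te$ with $\|x\| = t$ amounts to noting that both sides of $\|y\|^2 = t\scal{e}{y}$ are nonnegative under our case hypotheses, which ensures $t \geq 0$ as required.
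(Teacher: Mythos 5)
Your proposal is correct. It takes a genuinely different (and arguably more natural) route than the paper: you \emph{derive} the formula by directly inverting $T_e$, fixing $y$ and reducing $T_ex=y$ to $x=2y-te$ with $t=\|x\|$, where the condition $t^2=\|2y-te\|^2$ collapses to the \emph{linear} scalar equation $\|y\|^2=t\scal{e}{y}$ precisely because $\|e\|=1$ kills the $t^2$ terms. The three cases of \eqref{e:0515a} then fall out of the solvability of that scalar equation in $t\in\RP$ (infinitely many solutions at $y=0$, none when $\scal{e}{y}\le 0$ and $y\neq 0$, exactly one when $\scal{e}{y}>0$), and your check that $\|2y-te\|=t$ for the candidate $t$ correctly closes the only reversibility gap. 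The paper instead names the right-hand side $B$ and verifies $\gra B=\gra A_e$ by proving the two inclusions through the Minty parametrization $\gra A_e=\menge{(T_ex,x-T_ex)}{x\in X}$: for the forward inclusion it constructs $y=x+u$ for each $(x,u)\in\gra B$ and checks $T_ey=x$, and for the reverse it takes arbitrary $y$, sets $x=T_ey$, and checks $y-T_ey\in Bx$. The paper's method requires guessing the answer in advance but each verification is mechanical; yours explains where the formula comes from and makes transparent why the operator degenerates on the set $\scal{e}{x}\le 0$. Both correctly outsource maximal monotonicity to the firm nonexpansiveness of $T_e$ with full domain.
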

\begin{proof}
Denote the right-hand side of \eqref{e:0515a} by $B$, i.e.,
set 
\begin{equation}
(\forall x\in X)\quad B x  = \begin{cases}
x-\frac{\|x\|^2}{\scal{e}{x}}\cdot e, &\text{if
$\scal{e}{x}>0$;}\\
\RM\cdot e, &\text{if $x=0$;}\\
\varnothing, &\text{otherwise.}
\end{cases}
\end{equation}
Our job is to show that $B=A_e$, and for that it suffices to show
that $\gra B = \gra A_e = \menge{(T_ex,x-T_ex)}{x\in X}$ by the Minty
parametrization \cite{Minty}. 
For convenience, we also write $T$ instead of $T_e$. 

First, let $x\in X$. We need to consider two cases.

\emph{Case~1:} $\scal{e}{x}>0$.\\
Set $u = x-\|x\|^{2}\scal{e}{x}^{-1}e\in Bx$ and 
$y = x+u = 2x-\|x\|^{2}\scal{e}{x}^{-1}e$. 
Then 
\begin{align}\|u\|^2=\|x\|^2+\|x\|^4\scal{e}{x}^{-2}\|e\|^2
-2\|x\|^2\scal{e}{x}^{-1}\scal{x}{e}=
\|x\|^4\scal{e}{x}^{-2}-\|x\|^2
\end{align}
and 
\begin{equation}
\scal{x}{u} = \scal{x}{x-\|x\|^{2}\scal{e}{x}^{-1}e} = 
\scal{x}{x}-\|x\|^2\scal{e}{x}^{-1}\scal{x}{e}=0.
\end{equation}
Hence
$\|y\|^2 = \|x\|^2+\|u\|^2+2\scal{x}{u}=
\|x\|^2+(\|x\|^4\scal{e}{x}^{-2}-\|x\|^2) +2\cdot 0 = 
\|x\|^{4}\scal{e}{x}^{-2}$ and 
thus $\|y\|=\|x\|^2\scal{e}{x}^{-1}$.
Hence 
\begin{subequations}
\begin{align}
Ty &= \tfrac{1}{2}y+\tfrac{1}{2}\|y\|e = 
\tfrac{1}{2}(x+u)+\tfrac{1}{2}\|x\|^2\scal{e}{x}^{-1}e\\
&=\tfrac{1}{2}
\big(x+(x-\|x\|^2\scal{e}{x}^{-1}e)+\|x\|^2\scal{e}{x}^{-1}e\big)\\
&= x
\end{align}
\end{subequations}
and
\begin{equation}
y-Ty = (x+u)-x = u. 
\end{equation}
We have shown that $(x,u)=(Ty,y-Ty)\in \gra A_e$ as required. 

\emph{Case~2:} $x=0$.\\
Here we set $u=\eta e$, where $\eta \leq 0$, and
again $y=x+u=\eta e$. 
Then
$\|y\|=\|u\| = \|\eta e\|=|\eta|\cdot\|e\|=|\eta|=-\eta$.
Hence
$Ty =\tfrac{1}{2}y+\tfrac{1}{2}\|y\|e
=\tfrac{1}{2}\eta e + \tfrac{1}{2}(-\eta)e = 0 = x$
and
$y-Ty=\eta e - x= u-0=u$.
Again, we have shown that $(x,u)=(Ty,y-Ty)\in\gra A_e$, as
claimed. 

Combining \emph{Case~1} and \emph{Case~2}, we obtain the
conclusion
\begin{equation}
\label{e:0515b}
\gra B \subseteq \gra A_e.
\end{equation}

Conversely, let $y\in X$. 
Set $x = Ty = \tfrac{1}{2}y+\tfrac{1}{2}\|y\|e$
and note that $y-Ty = \tfrac{1}{2}y-\tfrac{1}{2}\|y\|e$.
Furthermore, 
$2\scal{e}{x}
=\scal{y}{e}+\|y\|\scal{e}{e}
=\scal{y}{e}+\|y\|
\geq -\|y\|\cdot\|e\|+\|y\|=0$
with equality if and only if $y\in\RM\cdot e$,
i.e., $x=0$, 
by Cauchy--Schwarz. 
Moreover,
$\|x\|^2 = 
\|\tfrac{1}{2}y+\tfrac{1}{2}\|y\|e\|^2
=
\tfrac{1}{4}\|y\|^2 + \tfrac{1}{4}\|y\|^2
+\tfrac{1}{2}\|y\|\scal{y}{e}
=\tfrac{1}{2}(\|y\|^2+\|y\|\scal{y}{e})
$
and
$\scal{e}{x}=\scal{e}{\tfrac{1}{2}y+\tfrac{1}{2}\|y\|e}
=\tfrac{1}{2}\scal{e}{y}+\tfrac{1}{2}\|y\|
$.

We now consider two conceivable alternatives.

\emph{Case~1}: $\scal{e}{x}>0$.\\
Then 
\begin{subequations}
\begin{align}
Bx &= x - \frac{\|x\|^2}{\scal{e}{x}}\cdot e
= \tfrac{1}{2}y+\tfrac{1}{2}\|y\|e - 
\frac{\tfrac{1}{2}(\|y\|^2+\|y\|\scal{y}{e})}{\tfrac{1}{2}\scal{e}{y}+\tfrac{1}{2}\|y\|}\cdot
e\\
&=\tfrac{1}{2}y-\tfrac{1}{2}\|y\|e \\
&=y-Ty. 
\end{align}
\end{subequations}
Therefore, $(Ty,y-Ty)=(x,Bx)\in\gra B$. 

\emph{Case~2}: $x=0$.\\
Then $y=\eta e$, where $\eta\in\RM$. 
Thus
$y-Ty=\tfrac{1}{2}y-\tfrac{1}{2}\|y\|e
=\tfrac{1}{2}\eta e - \tfrac{1}{2}|\eta|e
=\eta e \in B0=Bx
$.
Therefore, 
$(y,Ty)\in \{0\}\times B0 \subseteq \gra B$. 

Combining \emph{Case~1} with \emph{Case~2}, we deduce that
\begin{equation}
\label{e:0515c}
\gra A_e \subseteq \gra B.
\end{equation}
Finally, \eqref{e:0515b} and \eqref{e:0515c} yield the result.
\end{proof}

\begin{theorem}[{\bf $A_e$ for $\|e\|<1$}]
Suppose that $\|e\|<1$. 
Then the maximally monotone operator $A_e$ is given by 
\begin{equation}
\label{e:0515d}
(\forall x\in X)\quad A_e x = 
x +
\frac{2\scal{e}{x}-2\sqrt{(1-\|e\|^2)\|x\|^2+\scal{e}{x}^2}}{1-\|e\|^2}\cdot
e.
\end{equation}
\end{theorem}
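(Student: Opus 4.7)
The plan is to invert $T_e$ explicitly using the definition $A_e = T_e^{-1}-\Id$. For a fixed $x\in X$, $u\in A_e x$ is equivalent to $T_e(x+u)=x$; writing $y:=x+u$, this becomes
\begin{equation*}
\tfrac{1}{2}y+\tfrac{1}{2}\|y\|e = x, \qquad\text{i.e.,}\qquad y = 2x - \|y\|\cdot e.
\end{equation*}
So the problem of inverting $T_e$ at $x$ reduces to determining the scalar $t:=\|y\|\ge 0$, after which $y=2x-te$ is forced.

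Taking squared norms in $y=2x-te$ and simplifying yields the scalar quadratic
\begin{equation*}
(1-\|e\|^2)\,t^2 + 4\scal{e}{x}\,t - 4\|x\|^2 = 0.
\end{equation*}
Because $\|e\|<1$, the leading coefficient $1-\|e\|^2$ is strictly positive, the constant term $-4\|x\|^2$ is nonpositive, and hence the product of the two roots is nonpositive. Thus this quadratic has at most one nonnegative root, which is precisely
\begin{equation*}
t \;=\; \frac{-2\scal{e}{x} + 2\sqrt{\scal{e}{x}^2+(1-\|e\|^2)\|x\|^2}}{1-\|e\|^2}\,.
\end{equation*}
The bound $\sqrt{\scal{e}{x}^2+(1-\|e\|^2)\|x\|^2}\ge|\scal{e}{x}|$ ensures $t\ge 0$.

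It then remains to verify the converse, namely that with this $t\ge 0$, the vector $y:=2x-te$ truly satisfies $\|y\|=t$ (not just $\|y\|^2=t^2$ coming from the squared equation); this is straightforward from the construction. This simultaneously shows that $T_e$ is a bijection of $X$, that $A_e$ is single-valued with full domain $X$, and that
\begin{equation*}
A_e x \;=\; y-x \;=\; x - t\cdot e \;=\; x + \frac{2\scal{e}{x} - 2\sqrt{(1-\|e\|^2)\|x\|^2+\scal{e}{x}^2}}{1-\|e\|^2}\cdot e,
\end{equation*}
which is the announced formula.

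The computation is routine; the only real care point is the sign selection and norm-consistency check in the last paragraph, ensuring that the algebraic root of the squared equation is genuinely attained as a norm. This is exactly the place where the hypothesis $\|e\|<1$ is crucial: it forces the leading coefficient to be positive, rules out the degeneracy seen in the $\|e\|=1$ case (where the quadratic collapses to a linear equation and $A_e$ fails to be single-valued at $0$), and yields a unique nonnegative root $t$. Once single-valuedness and the formula are established pointwise, maximal monotonicity is inherited from Proposition~\ref{p:retraction}\ref{p:retraction:iv} via the firm nonexpansiveness of $T_e$ in \eqref{eq:def:Te}--\eqref{eq:def:Ae}.
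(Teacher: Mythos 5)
Your proposal is correct and follows essentially the same route as the paper: both reduce the inversion of $T_e$ to the quadratic $(1-\|e\|^2)t^2+4\scal{e}{x}t-4\|x\|^2=0$ for $t=\|y\|$ and exploit the positive leading coefficient (this is where $\|e\|<1$ enters) to isolate the unique nonnegative root. The paper merely packages the same computation as two graph inclusions via the Minty parametrization, whereas you phrase it as a single chain of equivalences solving $T_e(x+u)=x$; this is a presentational difference only.
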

\begin{proof}
Denote the right-hand side of \eqref{e:0515d} by $B$, i.e.,
set 
\begin{equation}
(\forall x\in X)\quad Bx  = 
x +
\frac{2\scal{e}{x}-2\sqrt{(1-\|e\|^2)\|x\|^2+\scal{e}{x}^2}}{1-\|e\|^2}\cdot
e.
\end{equation}
We also write
\begin{equation}
B x =x+\rho(x)\cdot e
\end{equation}
and observe that
$\rho(x)$ is the \emph{nonpositive} root of the quadratic equation
\begin{equation}
\label{e:quad}
(1-\|e\|^2)\rho^2 -4\scal{e}{x}\rho-4\|x\|^2=0.
\end{equation}
Once again, our job is to show that $B=A_e$, and for that it 
suffices to show
that $\gra B = \gra A_e = \menge{(T_ex,x-T_ex)}{x\in X}$ by the Minty
parametrization \cite{Minty}. 
For convenience, we also abbreviate $T=T_e$ and 
$\rho=\rho(x)$.

First, let $x\in X$ and set 
$y=x+Bx=2x+\rho e$. 
Then $\|y\|^2 = \|2x+\rho e\|^2
=4\|x\|^2+4\rho\scal{x}{e}+\rho^2\|e\|^2
=4\|x\|^2+4\rho\scal{x}{e}+\rho^2(\|e\|^2-1)+\rho^2 =
0+\rho^2=\rho^2$
by \eqref{e:quad};
thus, $\|y\|=|\rho| = -\rho$. 
Hence
\begin{equation}
Ty = \tfrac{1}{2}y+\tfrac{1}{2}\|y\|e
=\tfrac{1}{2}(2x+\rho e)+\tfrac{1}{2}(-\rho)e
=x
\end{equation}
and 
so 
\begin{equation}
y-Ty=(2x+\rho e)-x = x+\rho e = Bx.
\end{equation}
It follows that $(x,Bx)=(Ty,y-Ty)\in\gra A_e$ and thus
\begin{equation}
\label{e:0515e}
\gra B \subseteq \gra A_e.
\end{equation}

Conversely, let $y\in X$,
set $x = Ty = \tfrac{1}{2}y+\tfrac{1}{2}\|y\|e$ and note
that $y-Ty = \tfrac{1}{2}y-\tfrac{1}{2}\|y\|e$.
We have
$\|x\|^2 = \| \tfrac{1}{2}y+\tfrac{1}{2}\|y\|e\|^2
= \tfrac{1}{4}\|y\|^2+\tfrac{1}{4}\|y\|^2\|e\|^2
+\tfrac{1}{2}\|y\|\scal{y}{e}$
and 
$\scal{e}{x}=\scal{e}{ \tfrac{1}{2}y+\tfrac{1}{2}\|y\|e}
=\tfrac{1}{2}\scal{e}{y}+\tfrac{1}{2}\|y\|\|e\|^2$.
Hence
\begin{subequations}
\begin{align}
&\negthinspace\negthinspace\negthinspace 
(1-\|e\|^2)(-\|y\|)^2 - 4\scal{e}{x}(-\|y\|)-4\|x\|^2\\
&=(1-\|e\|^2)\|y\|^2+2\|y\|\scal{e}{y+\|y\|e} -
\big(\|y\|^2+\|y\|^2\|e\|^2+2\|y\|\scal{y}{e}\big)\\
&= 0.
\end{align}
\end{subequations}
In view of \eqref{e:quad} and the fact that $-\|y\|\leq 0$, it
follows that $-\|y\|=\rho(x)$. 
Hence
\begin{equation}
Bx=x+\rho(x)e 
=\big(\tfrac{1}{2}y+\tfrac{1}{2}\|y\|e\big)
-\|y\|e
=\tfrac{1}{2}y-\tfrac{1}{2}\|y\|e=y-Ty.
\end{equation}
Hence $(Ty,y-Ty)=(x,Bx)\in\gra B$ and thus
\begin{equation}
\label{e:0515f}
\gra A_e \subseteq \gra B.
\end{equation}

The conclusion now follows by combining
\eqref{e:0515e} with \eqref{e:0515f}. 
\end{proof}

\section{Duality and stronger notions of monotonicity}

\label{sec:Du}

If $A\colon X\To X$ is maximally monotone, 
then its \emph{dual} operator is $A^{-1}$, and the corresponding
dual objects 
of the resolvent and reflected resolvent are 
$\Id-J_A$ and $-R_A$, respectively \cite{BMW2012}. 
We now identify the dual objects, which have
pleasant explicit formulae, as well as some other interesting
properties.

\begin{lem}
\label{lem:micel:prop}
Recall that $A_e$, $T_e$, and $R_e$ are defined in
\eqref{eq:def:Ae}, \eqref{eq:def:Te}, and \eqref{eq:def:Re},
respectively. 
Then the following hold:
\begin{enumerate}
\item
{\rm \textbf{(duality)}}
\label{lem:micel:prop:a}
$-R_e = R_{-e}$,
$\Id-T_e = T_{-e}$, and
$(A_e)^{-1}=A_{-e}$.
\item
{\rm \textbf{(cone)}}
\label{lem:micel:prop:b}
$(\forall x\in X)(\forall\lambda\in\RPP)$
$A_e(\lambda x)=\lambda A_ex$. Consequently, $\gra A$ is a
(nonconvex) cone. 
\item
\label{lem:micel:prop:c}
If $\norm{e}=1$, then $(\forall (x,u)\in \gra A_e)$ $\innp{x,u}=0$.
\item
\label{lem:micel:prop:d}
If $\norm{e}=1$, then $\zer A_e = (A_e)^{-1}(0) =\RR_{+}\cdot e$.
\end{enumerate}
\end{lem}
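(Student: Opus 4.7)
The plan is to verify each item by direct computation using the explicit formulas for $R_e$, $T_e$, and $A_e$ established in the previous section.

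For \ref{lem:micel:prop:a}, the identity $-R_e = R_{-e}$ is immediate from \eqref{eq:def:Re}; combining it with \eqref{eq:def:Te} gives $\Id - T_e = \tfrac{1}{2}\Id - \tfrac{1}{2}R_e = \tfrac{1}{2}\Id + \tfrac{1}{2}R_{-e} = T_{-e}$. For $(A_e)^{-1} = A_{-e}$, I would invoke the standard resolvent-inversion identity $J_A + J_{A^{-1}} = \Id$. Since $J_{A_e} = T_e$ by \eqref{eq:def:Ae}, this yields $J_{A_e^{-1}} = \Id - T_e = T_{-e} = J_{A_{-e}}$, and because a maximally monotone operator is determined by its resolvent, $A_e^{-1} = A_{-e}$.

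For \ref{lem:micel:prop:b}, fix $\lambda \in \RPP$ and $x \in X$. When $\|e\| = 1$, the case conditions in \eqref{e:0515a} are invariant under $x \mapsto \lambda x$, since $\langle e, \lambda x\rangle$ and $\langle e, x\rangle$ share sign and $\lambda x = 0$ exactly when $x = 0$; positive homogeneity is then apparent in each branch after factoring $\lambda$. When $\|e\| < 1$, every term in \eqref{e:0515d} scales linearly in $\lambda$. Thus $A_e(\lambda x) = \lambda A_e x$, and consequently $(x, u) \in \gra A_e$ implies $(\lambda x, \lambda u) \in \gra A_e$, i.e., $\gra A_e$ is a cone.

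Items \ref{lem:micel:prop:c} and \ref{lem:micel:prop:d} follow by inspecting \eqref{e:0515a}. For \ref{lem:micel:prop:c}: in the first branch, $\langle x, u\rangle = \|x\|^2 - \tfrac{\|x\|^2}{\langle e, x\rangle}\langle e, x\rangle = 0$, while $x = 0$ makes the inner product vanish trivially. For \ref{lem:micel:prop:d}: requiring $0 \in A_e x$ in the first branch forces $x = \tfrac{\|x\|^2}{\langle e, x\rangle} e \in \RPP \cdot e$, and the second branch always contains $0$ since $0 \in \RM \cdot e$; hence $\zer A_e = \RP \cdot e$. The equality $\zer A_e = (A_e)^{-1}(0)$ is tautological, but it can also be cross-checked via \ref{lem:micel:prop:a}: $(A_e)^{-1}(0) = A_{-e}(0) = \RM \cdot (-e) = \RP \cdot e$.

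The proof is essentially mechanical substitution; no single step presents a serious obstacle. The only bookkeeping subtlety is in tracking the piecewise structure of \eqref{e:0515a}: for \ref{lem:micel:prop:b} I need to confirm that the empty ``otherwise'' branch remains empty under scaling, and for \ref{lem:micel:prop:d} I need to rule out zeros arising outside the first two branches.
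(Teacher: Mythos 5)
Your proposal is correct and follows essentially the same route as the paper: direct computation from the definitions for the first two items, the inverse resolvent identity for $(A_e)^{-1}=A_{-e}$, and inspection of \eqref{e:0515a} for the last two. The only cosmetic differences are that the paper derives $(A_e)^{-1}=A_{-e}$ first (via the reflected-resolvent duality $R_{A^{-1}}=-R_A$) and then deduces $\Id-T_e=T_{-e}$, whereas you reverse that order, and for item \ref{lem:micel:prop:d} the paper uses only the duality argument $A_e^{-1}(0)=A_{-e}(0)=\RR_+\cdot e$ that you give as a cross-check.
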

\begin{proof}
\ref{lem:micel:prop:a}:
It is clear that $R_{(A_e)^{-1}}=-R_{A_e}=-R_e
=-\norm{\cdot}e=R_{-e}$.
Hence, $(A_e)^{-1}=A_{-e}$.
Consequently,
by the inverse resolvent identity 
and \eqref{eq:def:Te}, we
have
$\Id-T_e=\Id-J_{A_e}=J_{(A_e)^{-1}}
=J_{A_{-e}}=T_{-e}$. 
\ref{lem:micel:prop:b}:
This can be directly verified 
using \eqref{e:0515a}
and \eqref{e:0515d}.
\ref{lem:micel:prop:c}:
Let $x\in \dom A$.
If $x=0$ then
$(\forall u\in Ax=\RR_{-}\cdot e)$
we have $\innp{x,u}=0$.
Now suppose that $\innp{e,x}>0$.  
Then
$Ax$
is a singleton
 and $\innp{x,Ax}=\iNNp{x,x-\tfrac{\normsq{x}}{\innp{e,x}}\cdot e}
 =\normsq{x}-\normsq{x}=0$.
 \ref{lem:micel:prop:d}:
 Indeed, $\zer A_e=A_e^{-1}(0)=A_{-e}(0)
 =\RR_{-}\cdot(-e)=\RR_{+}\cdot e$.
\end{proof}

Strong monotonicity, paramonotonicity, cocoercivity, and $3^*$
monotonicity are perhaps the most important properties a
maximally monotone operators can have. 
The following two results provide a complete characterization of
these properties. Once again, the norm $\|e\|$ plays a crucial
role.

\begin{proposition}
\label{p:0521a}
Suppose that $\|e\|=1$ and that $X$ is not one-dimensional. 
Then the following hold:
\begin{enumerate}
\item
\label{p:0521ai}
$A_e$ is \emph{not} paramonotone.
Consequently, $A_e$ is neither strictly nor strongly monotone.
\item
\label{p:0521aii}
$A_e$ is  \emph{not} $3^*$ monotone.
\end{enumerate}
\end{proposition}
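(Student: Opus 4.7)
The plan is to build explicit counterexamples to both properties out of a single family of witnesses. By Lemma~\ref{lem:micel:prop}\ref{lem:micel:prop:c}, every $(x,u)\in\gra A_e$ satisfies $\innp{x,u}=0$, and by Lemma~\ref{lem:micel:prop}\ref{lem:micel:prop:b}, $\gra A_e$ is a cone. Since $X$ is not one-dimensional, I would fix $f\in\{e\}^\perp$ with $\norm{f}=1$ and set $p:=e+f$; formula \eqref{e:0515a} then gives $q:=A_ep=-e+f$, and in particular $\innp{p,q}=0$. By positive homogeneity, the entire ray $\{(\lambda p,\lambda q):\lambda>0\}$ lies in $\gra A_e$.

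For \ref{p:0521ai}, I would fix any $\lambda>0$ with $\lambda\neq 1$ and feed the paramonotonicity test $(x,u):=(p,q)$ and $(y,v):=(\lambda p,\lambda q)$. Then $\innp{x-y,u-v}=(1-\lambda)^2\innp{p,q}=0$. Since $\innp{e,p}=1>0$, formula \eqref{e:0515a} says that $A_ep$ is the singleton $\{q\}$, so membership $(x,v)=(p,\lambda q)\in\gra A_e$ would force $\lambda q=q$, contradicting $q\neq 0$. Hence $A_e$ fails paramonotonicity; the chain (strongly monotone $\Rightarrow$ strictly monotone $\Rightarrow$ paramonotone) then yields the consequential statement.

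For \ref{p:0521aii}, I would use the same conical family and choose $a:=e+2f$ (which lies in $\dom A_e$ since $\innp{e,a}=1>0$) together with $b:=q\in\ran A_e$. A short expansion exploiting $\innp{p,q}=0$ gives
\[
\innp{a-\lambda p,\,b-\lambda q}=(1-\lambda)\bigl[-(1-\lambda)+(2-\lambda)\bigr]=1-\lambda,
\]
which tends to $-\infty$ as $\lambda\to+\infty$. Hence $\inf_{(z,z^*)\in\gra A_e}\innp{a-z,\,b-z^*}=-\infty$, and the Fitzpatrick-function characterization (see, e.g., \cite{BC2017}) rules out $3^*$ monotonicity.

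The only mildly delicate design choice is the ``affine offset'' $2f$ in $a$: one must pick $a$ and $b$ so that $\innp{a,q}+\innp{p,b}\neq 0$, otherwise the displayed expression degenerates to a constant in $\lambda$ and the argument collapses. Once that is arranged, everything else is routine bookkeeping with the orthogonality $\innp{p,q}=0$ and the positive homogeneity of $A_e$.
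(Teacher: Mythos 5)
Your argument is correct. For part \ref{p:0521ai} it is essentially the paper's own proof: both exploit positive homogeneity (Lemma~\ref{lem:micel:prop}\ref{lem:micel:prop:b}) and the orthogonality $\innp{x,A_ex}=0$ (Lemma~\ref{lem:micel:prop}\ref{lem:micel:prop:c}) to make $\innp{x-y,A_ex-A_ey}$ vanish along a ray, and then use single-valuedness of $A_e$ at points with $\innp{e,x}>0$ to see that the swapped pair is not in the graph; you merely instantiate the generic $x$ as $p=e+f$. For part \ref{p:0521aii}, however, you take a genuinely different route. The paper translates the question to the firmly nonexpansive level: it picks $x=2(f-e)$, $y=0$, $z=2\alpha f$, computes $\innp{T_ex-T_ez,(\Id-T_e)y-(\Id-T_e)z}=-\alpha(2-\sqrt{2})\to-\infty$, and invokes the resolvent characterization of $3^*$ monotonicity from \cite[Theorem~2.1(xvii)]{BMW2012}. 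You instead stay entirely on the graph of $A_e$, reuse the same conical ray $(\lambda p,\lambda q)$ from part \ref{p:0521ai}, and verify the standard (Fitzpatrick-domain) definition directly with $a=e+2f\in\dom A_e$ and $b=q\in\ran A_e$; the computation $\innp{a-\lambda p,b-\lambda q}=1-\lambda\to-\infty$ checks out since $\innp{p,q}=\innp{p,b}=0$ and $\innp{a,q}=1$. Your version is more elementary (no external characterization needed beyond the definition) and unifies the two parts around one family of witnesses; the paper's version has the virtue of exhibiting the failure at the level of $T_e$, which is the object actually constructed first. One tiny quibble: your closing remark says you need $\innp{a,q}+\innp{p,b}\neq 0$, but in fact you need this quantity to be \emph{positive} (a negative value would send the expression to $+\infty$ along the ray, proving nothing); since your concrete choice gives the value $1$, the proof itself is unaffected.
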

\begin{proof}
\ref{p:0521ai}:
Let $\alpha\in \left]1,+\infty\right[$,
let $x\in \dom A\smallsetminus{\RR_{+}\cdot e}$ 
and set $y=\alpha x$.
Note that $\innp{e,x}>0$,
that $Ay=A(\alpha x)=\alpha Ax\neq 0$ 
 by 
 Lemma~\ref{lem:micel:prop}\ref{lem:micel:prop:b}\&\ref{lem:micel:prop:d},
  and that $Ax$ (and consequently $Ay$) is a singleton.
Therefore,
by Lemma~\ref{lem:micel:prop}\ref{lem:micel:prop:c} 
$\innp{x-y,Ax-Ay}=\innp{(1-\alpha)x,(1-\alpha)Ax}
 =(1-\alpha)^2\innp{x,Ax}=0$.
However, $(x,Ay)\not\in \gra A$, hence $A$
is not paramonotone.

\ref{p:0521aii}:
Indeed, let $\alpha>0$,
let $f \in \{e\}^\perp$
such that $\norm{f}=1$,
set $x=2(f-e)$,
set $y=0$
and set $z=2\alpha f$.
We also write $T$ instead of $T_e$ for convenience.
Now
\begin{subequations}
\begin{align}
\innp{Tx-Tz, (\Id-T)y-(\Id-T)z}
&=-\innp{Tx-Tz, (\Id-T)z}\\
&=-\innp{f-e+\norm{f-e}e-\alpha(f+e),
\alpha(f-e)}\\
&=-\innp{f-e+\norm{f-e}e,\alpha(f-e)}
+\alpha^2(\normsq{f}-\normsq{e})\\
&=-\alpha\innp{f-e+\norm{f-e}e,f-e}\\
&=-\alpha(\normsq{f-e}-\norm{f-e})=-\alpha(2-\sqrt{2}).
\end{align}
\end{subequations}
Therefore, we conclude that
$\inf_{z\in X } \innp{Tx-Tz, (\Id-T)y-(\Id-T)z}=-\infty$,
hence $A$ is \emph{not}
$3^*$ monotone by \cite[Theorem~2.1(xvii)]{BMW2012}.
\end{proof}

\begin{prop}
\label{prop:ston:mono}
Suppose that $\norm{e}<1$. Then
the following hold:
\begin{enumerate}
\item
\label{prop:ston:mono:i}
$A_e$ is strongly monotone, with sharp constant $\tfrac{1-\norm{e}}{1+\norm{e}}$.
\item
\label{prop:ston:mono:iii}
$A_e$ is cocoercive, with sharp constant 
$\tfrac{1-\norm{e}}{1+\norm{e}}$. 
\item
\label{prop:ston:mono:ii}
$A_e$ is paramonotone.
\item
\label{prop:ston:mono:iv}
$A_e$ is $3^*$ monotone.
\item 
\label{prop:ston:mono:v}
$A_e$ is a displacement map if and 
only if $\norm{e}\le\tfrac{1}{3}$.
\end{enumerate}
\end{prop}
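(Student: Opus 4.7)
The plan is to establish (i) first, then deduce (ii)--(v) from it in sequence, with (v) a quick corollary of (ii).

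For (i), I would use the Minty parametrization of $\gra A_e$ as $\{(T_e x,\, x - T_e x) : x \in X\}$, so strong monotonicity with constant $\beta := (1-\|e\|)/(1+\|e\|)$ is equivalent to
\[
\innp{T_e x - T_e y,\, (x - T_e x) - (y - T_e y)} \ge \beta\, \|T_e x - T_e y\|^2 \qquad \forall x, y \in X.
\]
Substituting $T_e x = \tfrac{1}{2}(x + \|x\| e)$ and introducing the shorthand $d := \|x - y\|$, $s := \|x\| - \|y\|$, $c := \innp{x-y,\, e}$, so that $|s| \le d$ by the reverse triangle inequality and $|c| \le d\|e\|$ by Cauchy--Schwarz, the inequality becomes $d^2 - s^2\|e\|^2 \ge \beta(d^2 + s^2\|e\|^2 + 2 s c)$. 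Bounding $2 s c \le 2 |s|\cdot d\|e\|$ on the right, this reduces to
\[
(d - |s|\|e\|)(d + |s|\|e\|) \ge \beta\,(d + |s|\|e\|)^2,
\]
equivalently $\beta \le (d - |s|\|e\|)/(d + |s|\|e\|)$; the right-hand side is decreasing in $|s| \in [0, d]$ and attains $(1-\|e\|)/(1+\|e\|) = \beta$ at $|s| = d$. For sharpness I would take $y = 0$ and $x = t e$ with $t > 0$, which makes $|s| = d$ and $c = |s|\|e\|$, so that all intermediate inequalities become equalities.

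Parts (ii)--(iv) follow as short corollaries. For (ii), cocoercivity of $A_e$ with sharp constant $\beta$ is equivalent to $\beta$-strong monotonicity of $A_e^{-1} = A_{-e}$ (Lemma~\ref{lem:micel:prop}\ref{lem:micel:prop:a}); since $\|-e\| = \|e\| < 1$, applying (i) to $A_{-e}$ yields (ii) with the same sharp constant. For (iii), strong monotonicity forces $\innp{x - y,\, u - v} = 0$ to imply $x = y$, whereupon the paramonotonicity condition holds trivially. For (iv), combining single-valuedness with strong monotonicity gives
\[
\innp{x - z,\, u - A_e z} = \innp{x - z,\, u - A_e x} + \innp{x - z,\, A_e x - A_e z} \ge -\|x - z\|\,\|u - A_e x\| + \beta\,\|x - z\|^2,
\]
which as a scalar quadratic in $\|x - z\|$ is bounded below by $-\|u - A_e x\|^2/(4\beta)$ uniformly in $z$, proving $3^*$ monotonicity.

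For (v), I would invoke the standard characterization that $A_e$ is a displacement map (i.e.\ $A_e = \Id - N$ with $N$ nonexpansive) iff $\Id - A_e$ is nonexpansive iff $A_e$ is $\tfrac{1}{2}$-cocoercive. Combined with the sharp cocoercivity constant $(1-\|e\|)/(1+\|e\|)$ from (ii), this gives: $A_e$ is a displacement map iff $(1-\|e\|)/(1+\|e\|) \ge \tfrac{1}{2}$, which simplifies to $\|e\| \le \tfrac{1}{3}$. The main technical hurdle is the algebra in (i); once its sharp strong-monotonicity constant is secured, assertions (ii)--(v) follow with minimal additional effort.
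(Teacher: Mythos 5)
Your proposal is correct, and for the core items it follows the same architecture as the paper: item (i) via the Minty parametrization $\gra A_e=\{(T_ex,x-T_ex)\}$ with the reverse triangle inequality and Cauchy--Schwarz (your scalar reduction in $d,s,c$ is the paper's two separate bounds $4\langle x-y,A_ex-A_ey\rangle\ge(1-\|e\|^2)\|u-v\|^2$ and $2\|x-y\|\le(1+\|e\|)\|u-v\|$ repackaged, and your sharpness pair $x=te$, $y=0$ is the paper's $(e,0)$ up to scaling), and item (ii) by exactly the same duality step $A_e=(A_{-e})^{-1}$. Where you differ is in (iii)--(v): the paper derives paramonotonicity, $3^*$ monotonicity, and the displacement-map characterization from cocoercivity by citing textbook results (Examples~22.8 and 25.20(i) of the Bauschke--Combettes book and Proposition~4.13 of the resolvent-average paper), whereas you give short self-contained arguments --- paramonotonicity because strong monotonicity forces $\langle x-y,u-v\rangle=0\Rightarrow x=y$, $3^*$ monotonicity from the uniform quadratic lower bound $\beta r^2-\|u-A_ex\|r\ge-\|u-A_ex\|^2/(4\beta)$, and (v) from the equivalence between being a displacement map and $\tfrac12$-cocoercivity. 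Your versions are elementary and make the paper independent of those references (and your (iii)--(iv) need only strong monotonicity plus full domain, not cocoercivity); the paper's citations are shorter. The only cosmetic caveat is the degenerate case $e=0$, where your sharpness witness collapses to $x=y=0$ (as does the paper's), but there $A_0=\Id$ and the constant $1$ is trivially sharp.
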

\begin{proof}
\ref{prop:ston:mono:i}:
Let $(x,y)\in X\times X$.
By the Minty parametrization \cite{Minty} of $\gra A_e$
$(\exists (u,v)\in X\times X)$
such that 
$(x,Ax)=(T_eu, u-T_eu)=\tfrac{1}{2}(u+\norm{u}e,u-\norm{u}e)$
 and 
$(y,Ay)=(T_ev, v-T_ev)=\tfrac{1}{2}(v+\norm{v}e,v-\norm{v}e)$.
Now, on the one hand, by the reverse triangle inequality we have 
\begin{subequations}
\label{eq:stmono:1}
\begin{align}
4\innp{x-y, Ax-Ay}&=\innp{(u-v)+(\norm{u}e-\norm{v}e),(u-v)-(\norm{u}e-\norm{v}e)} 
\\
&=\normsq{u-v}-(\norm{u}-\norm{v})^2\normsq{e}\ge (1-\normsq{e})\normsq{u-v}.
\end{align}
\end{subequations}
On the other hand, using the triangle inequality
and the reverse triangle inequality 
we have
\begin{equation}
\label{eq:stmono:2}
2\norm{x-y}=\norm{(u-v)+(\norm{u}-\norm{v})e}
\le \norm{u-v}+\big|\norm{u}-\norm{v}\big|\norm{e}\le (1+\norm{e})\norm{u-v}.
\end{equation}
Combining \eqref{eq:stmono:1}
 and \eqref{eq:stmono:2} yields
\begin{equation}
\innp{x-y, Ax-Ay}
\ge \tfrac{1-\normsq{e}}{(1+\norm{e})^2}\normsq{x-y}
=\tfrac{1-\norm{e}}{1+\norm{e}}\normsq{x-y}.
\end{equation}
To show that the strong monotonicity constant is sharp
we set $(x,y)= (e,0)$.
Now
\begin{subequations}
\begin{align}
\innp{e-0,Ae-A0}=\innp{e,Ae}
&=\iNNp{e, \Big(1+\tfrac{2\scal{e}{e}-2\sqrt{(1-\|e\|^2)\|e\|^2+\scal{e}{e}^2}}{1-\|e\|^2}\Big)e}\\
&=\Big(1+\tfrac{2\normsq{e}-2\norm{e}}{1-\normsq{e}}\Big)\normsq{e}
=\Big(\tfrac{\normsq{e}-2\norm{e}+1}{1-\normsq{e}}\Big)\normsq{e}\\
&=\tfrac{1-\norm{e}}{1+\norm{e}} \normsq{e-0}.
\end{align}
\end{subequations}
\ref{prop:ston:mono:iii}:
It follows from 
\ref{prop:ston:mono:i} applied with $e$
replaced by $-e$ that $A_{-e}$ is strongly monotone 
with sharp constant $\tfrac{1-\norm{e}}{1+\norm{e}}$.
Combining with 
\ref{lem:micel:prop:a}
and \cite[Example~22.7]{BC2017}
 we conclude that $A_e=(A_{-e})^{-1}$
 is $\tfrac{1-\norm{e}}{1+\norm{e}}$-cocoercive.
 \ref{prop:ston:mono:ii}:
Combine \ref{prop:ston:mono:iii}
and \cite[Example~22.8]{BC2017}.
 \ref{prop:ston:mono:iv}:
 Combine \ref{prop:ston:mono:iii}
  and
\cite[Example~25.20(i)]{BC2017}.
 \ref{prop:ston:mono:v}:
It follows from \cite[Proposition~4.13]{BBMW16}
 and  \ref{prop:ston:mono:iii}
in view of  \cite[Example~22.7]{BC2017}
that $A_e$ is a displacement map 
if and only if $\tfrac{1-\norm{e}}{1+\norm{e}}\ge \tfrac{1}{2}$;
equivalently $\norm{e}\le \tfrac{1}{3}$. 
\end{proof}

\begin{remark}[{\bf the one-dimensional case for $\|e\|=1$}]
Now suppose that $X$ is one-dimensional.
Then $A_e$ is a subdifferential operator; in fact,
$A_e=N_{\RP\, e}$.
Hence $A_e$ is both paramonotone (see, e.g.,
\cite[Example~22.4(i)]{BC2017}) and $3^*$ monotone (see, e.g.,
\cite[Example~25.14]{BC2017}), but neither
strictly nor strongly monotone.
\end{remark}

\begin{remark}[{\bf cyclic monotonicity}]
If $X$ is one-dimensional or $e=0$, 
then $A_e$ is clearly
cyclically monotone; otherwise, $A_e$ is not a gradient
by the formulae provided in Section~\ref{sec:ReTeAe}. 
Based on some numerical experiments, 
we conjecture that the degree of cyclic
monotonicity decreases from $+\infty$ (when $\|e\|=0$) to 
2 (when $\|e\|=1$) as $\|e\|$ increases; 
however, we do not know at which values the
transitions occur.
\end{remark}

\section{Resolvent iteration and average}

\label{sec:last}

It is well known 
that the sequence 
$((T_e)^nx_0)_\nnn$ converges weakly to some point in $\Fix
T_e = \zer A_e$; however, more can be said in our setting:

\begin{proposition}[{\bf resolvent iteration}]
\label{p:0521b}
Suppose that $X$ is not one-dimensional and let $x_0\in X$. 
The following hold true:
\begin{enumerate}
\item
\label{p:0521bi}
$((T_e)^nx_0)_\nnn$ converges \emph{strongly} to some point in $\Fix
T_e = \zer A_e$.
\item
\label{p:0521bii}
If $\|e\|<1$, then $(T_e)^nx_0\to 0$ linearly, with rate
$(1+\|e\|)/2$. 
\item
\label{p:0521biii}
If $\|e\|=1$, then $(T_e)^nx_0\to \|x_0\|\sinc(\theta_0)e$, where
$\sinc$ is the (unnormalized) sinc function and
$\theta_0\in[0,\pi]$ satisfies
$\cos(\theta_0)\|x_0\|=\scal{x_0}{e}$.
\end{enumerate}
\end{proposition}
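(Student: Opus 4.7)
The plan is to establish \ref{p:0521bii} and \ref{p:0521biii} separately and then deduce \ref{p:0521bi} as their combination.

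I would first dispatch \ref{p:0521bii}, which is essentially Banach. By Proposition~\ref{p:retraction}\ref{p:retraction:iii}, when $\|e\|<1$ the map $R_e$ is a Banach contraction with optimal constant $\|e\|$ and $\Fix R_e=\{0\}$. Since $T_e=\tfrac{1}{2}\Id+\tfrac{1}{2}R_e$ is a convex combination, it is a Banach contraction with constant $(1+\|e\|)/2<1$, and a direct check (or the fact that $T_e x=x \Leftrightarrow R_e x=x$) gives $\Fix T_e=\{0\}$. The Banach contraction principle then yields $(T_e)^n x_0\to 0$ with linear rate $(1+\|e\|)/2$.

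The heart of the proof is \ref{p:0521biii}. Write $x_n:=(T_e)^n x_0$ and decompose orthogonally $x_n=\alpha_n e + y_n$, where $\alpha_n:=\scal{x_n}{e}$ and $y_n\in\{e\}^\perp$. From $x_{n+1}=\tfrac{1}{2}x_n+\tfrac{1}{2}\|x_n\|e$ and $\|e\|=1$ I would read off the coupled recursion
\begin{equation*}
y_{n+1}=\tfrac{1}{2}y_n,\qquad \alpha_{n+1}=\tfrac{1}{2}(\alpha_n+r_n),\qquad r_{n+1}^2=\tfrac{1}{2}r_n(r_n+\alpha_n),
\end{equation*}
where $r_n:=\|x_n\|$. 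Thus $\|y_n\|=\|y_0\|/2^n\to 0$. To analyze $(r_n,\alpha_n)$ I would parametrize by an angle $\theta_n\in[0,\pi]$ with $\cos\theta_n=\alpha_n/r_n$ (for $r_n>0$). The half-angle identity converts the last display into $r_{n+1}=r_n\cos(\theta_n/2)$, and substituting this into the $\alpha_{n+1}$ recursion gives $\cos\theta_{n+1}=\cos(\theta_n/2)$, hence $\theta_{n+1}=\theta_n/2$ and $\theta_n=\theta_0/2^n$. Telescoping yields
\begin{equation*}
r_n=r_0\prod_{k=1}^{n}\cos(\theta_0/2^k)=r_0\,\frac{\sin\theta_0}{2^n\sin(\theta_0/2^n)}
\end{equation*}
by Vi\`ete's classical cosine product, and letting $n\to\infty$ gives $r_n\to r_0\sinc(\theta_0)$. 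Combined with $\cos\theta_n\to 1$ this delivers $\alpha_n\to r_0\sinc(\theta_0)$, and together with $\|y_n\|\to 0$ we conclude $x_n\to\|x_0\|\sinc(\theta_0)\,e$. The degenerate cases $x_0=0$ and $x_0\in\RR\cdot e$ (in which $\theta_0\in\{0,\pi\}$) are handled by inspection, matching the stated formula via $\sinc(0)=1$ and $\sinc(\pi)=0$.

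Finally, \ref{p:0521bi} is immediate: \ref{p:0521bii} gives strong convergence to $0\in\zer A_e=\Fix T_e$ when $\|e\|<1$, and \ref{p:0521biii} gives strong convergence to $\|x_0\|\sinc(\theta_0)e\in\RP\cdot e=\Fix T_e$ when $\|e\|=1$ (using Proposition~\ref{p:retraction}\ref{p:retraction:i}). The main obstacle I anticipate is the bookkeeping in \ref{p:0521biii}: verifying that the recursion in the $(\alpha_n,r_n)$-plane really linearizes under the angular change of variables $\theta_n$, and cleanly invoking Vi\`ete's telescoping identity to obtain the closed form for $r_n$ before passing to the limit; the rest is essentially routine.
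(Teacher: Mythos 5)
Your proof is correct, and for the main item \ref{p:0521biii} it is essentially the paper's argument: both reduce to the two\mbox{-}dimensional subspace containing the orbit, show that the angle to $e$ halves at each step ($\theta_{n+1}=\theta_n/2$ --- which you derive explicitly from the recursions for $\alpha_n=\scal{x_n}{e}$ and $r_n=\|x_n\|$ via the half\mbox{-}angle identity, whereas the paper asserts it ``inductively''), and then invoke Vi\`ete's product $\prod_{k\geq 1}\cos(\theta_0/2^k)=\sinc(\theta_0)$. Where you genuinely diverge is in \ref{p:0521bi} and \ref{p:0521bii}. The paper proves \ref{p:0521bi} independently, by quoting the general weak\mbox{-}convergence theorem for iterates of firmly nonexpansive maps and upgrading to strong convergence because all iterates lie in $\spn\{e,x_0\}$; you instead deduce \ref{p:0521bi} a posteriori from the explicit limits in \ref{p:0521bii} and \ref{p:0521biii}, which is more self\mbox{-}contained but makes \ref{p:0521bi} logically dependent on the other two parts (your identification of the limit as a point of $\Fix T_e=\zer A_e$ via $\Fix T_e=\Fix R_e$ and Proposition~\ref{p:retraction} is fine). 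For \ref{p:0521bii} the paper extracts the asymptotic ratio $\|x_{n+1}\|/\|x_n\|\to(1+\|e\|)/2$ from the polar recursion together with the claim $\theta_n\to 0^+$, while you use the Banach contraction principle, which gives the uniform bound $\|x_{n+1}\|\leq\tfrac{1+\|e\|}{2}\|x_n\|$. Your route certifies $(1+\|e\|)/2$ only as an upper bound on the linear rate rather than as the exact asymptotic ratio; on the other hand, it is immune to the degenerate initial points where $x_0$ is a negative multiple of $e$ (and $0<\|e\|<1$), for which the orbit stays on the ray $\RM\cdot e$, the contraction factor is actually $(1-\|e\|)/2$, and the assertion $\theta_n\to 0^+$ fails --- so the upper\mbox{-}bound reading of ``rate $(1+\|e\|)/2$'' is the defensible one, and your argument establishes exactly that.
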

\begin{proof}
\ref{p:0521bi}:
On the one hand, the convergence is known to be weak,
see, e.g., \cite[Proposition~5.16(iii)]{BC2017}. 
On the other hand, all iterates lie in $\spn\{e,x_0\}$.
Altogether, the convergence must be strong. 

\ref{p:0521bii}\&\ref{p:0521biii}:
Write $x_n=(T_e)^nx_0$ for every $\nnn$. 
The result is clear when $e=0$ since then $T_e =
\tfrac{1}{2}\Id$. Assume that $e\neq 0$ and that $x_0\neq 0$, 
and let us
work in ``polar coordinates'', i.e., 
pick $f\in \spn\{x_0,e\}\cap\{e\}^\perp$ such that $\|f\|=1$ and
write $x_n = \|x_n\|(\cos(\theta_n)\widehat{e}+\sin(\theta_n)f)$, where
$\widehat{e} = e/\|e\|$ and $\theta_n\in[0,\pi]$. 
Then 
\begin{equation}
\label{e:0522a}
\|x_{n+1}\|^2/\|x_n\|^2 =
(1+2\cos(\theta_n)\|e\|+\|e\|^2)/4
\end{equation}
and $\theta_n\to 0^+$. 
Thus $\|x_{n+1}\|^2/\|x_n\|^2\to (1+\|e\|^2 + 2\|e\|)/4
=((1+\|e\|)/2)^2$ and the result follows when $\|e\|<1$.

Now assume that $\|e\|=1$.
Then, by \eqref{e:0522a}, 
$\|x_{n+1}\|^2=\|x_n\|^2(1+\cos(\theta_n))/2 =
\|x_n\|^2\cos^2(\theta_n/2)$.
Inductively, it follows that 
\begin{subequations}
\begin{align}
x_n &= \|x_n\|\big(\cos(\theta_0/2^n)e+\sin(\theta_0/2^n)f\big)\\
&=\|x_0\|\Big(\prod_{k=1}^{n-1}\cos(\theta_0/2^k)\Big)\big(\cos(\theta_0/2^n)e+\sin(\theta/2^n)f\big)\\
&\to \|x_0\|\Big(\prod_{k=1}^\infty\cos(\theta_0/2^k)\Big)e\\
&=\|x_0\|\sinc(\theta_0)e \label{e:bla},
\end{align}
\end{subequations}
where we use \cite[equation~(1.3) on page~2]{Kac}
in \eqref{e:bla}. 
\end{proof}

We conclude this paper with 
an observation on the resolvent average that follows readily from
the definition. 

\begin{proposition}[{\bf resolvent average}]
Suppose that $\lambda_1,\lambda_2,\ldots,\lambda_m$ are in
$[0,1]$ such that $\sum_{i=1}^{m} {\lambda_i}=1$ and that
$e_1,\ldots,e_m$ are in the unit ball of $X$.
Set $\bar{e}=\sum_{i=1}^{m}\lambda_ie_i$.
Then $\sum_{i=1}^{m}\lambda_iR_{e_i} = R_{\bar{e}}$
and consequently the resolvent average \cite{BBMW16} of
$A_{e_1},\ldots,A_{e_m}$, with parameters
$\lambda_1,\ldots,\lambda_m$, is $A_{\bar{e}}$. 
\end{proposition}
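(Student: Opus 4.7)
The plan is to work through two short observations, the first being the identity on the nonexpansive retractions and the second being its transfer to the resolvents, from which the identification of the resolvent average follows directly from its definition.

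First I would check that the statement is well-posed, i.e., that $A_{\bar e}$ is defined in our framework. By the triangle inequality,
\begin{equation}
\|\bar e\| = \Bigl\|\sum_{i=1}^m \lambda_i e_i\Bigr\| \le \sum_{i=1}^m \lambda_i \|e_i\| \le \sum_{i=1}^m \lambda_i = 1,
\end{equation}
so $\bar e$ lies in the unit ball of $X$ and $R_{\bar e}$, $T_{\bar e}$, and $A_{\bar e}$ are defined via \eqref{eq:def:Re}, \eqref{eq:def:Te}, and \eqref{eq:def:Ae}.

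Next I would verify the key identity $\sum_{i=1}^m \lambda_i R_{e_i} = R_{\bar e}$ by a one-line computation: for every $x\in X$,
\begin{equation}
\sum_{i=1}^m \lambda_i R_{e_i}(x) = \sum_{i=1}^m \lambda_i \|x\|\, e_i = \|x\|\cdot \bar e = R_{\bar e}(x).
\end{equation}
Averaging the defining formula $T_{e} = \tfrac{1}{2}\Id + \tfrac{1}{2}R_{e}$ from \eqref{eq:def:Te} with the weights $\lambda_i$ (and using $\sum_i \lambda_i = 1$) then gives
\begin{equation}
\sum_{i=1}^m \lambda_i T_{e_i} = \tfrac{1}{2}\Id + \tfrac{1}{2}\sum_{i=1}^m \lambda_i R_{e_i} = \tfrac{1}{2}\Id + \tfrac{1}{2}R_{\bar e} = T_{\bar e}.
\end{equation}

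Since $T_{e_i} = J_{A_{e_i}}$ and $T_{\bar e} = J_{A_{\bar e}}$ by construction, the previous identity reads $\sum_{i=1}^m \lambda_i J_{A_{e_i}} = J_{A_{\bar e}}$. By the definition of the resolvent average of $A_{e_1},\ldots,A_{e_m}$ with weights $\lambda_1,\ldots,\lambda_m$ (see \cite{BBMW16}), this resolvent average is precisely the maximally monotone operator whose resolvent equals $\sum_i \lambda_i J_{A_{e_i}}$, which is therefore $A_{\bar e}$. No real obstacle is expected, as everything reduces to the linearity of $x\mapsto \|x\|e$ in the parameter $e$.
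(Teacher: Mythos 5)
Your proposal is correct and is exactly the argument the paper intends: the paper states only that the result ``follows readily from the definition,'' and your one-line computation $\sum_i\lambda_i\|x\|e_i=\|x\|\bar e$, followed by averaging $T_e=\tfrac12\Id+\tfrac12 R_e$ and invoking the definition of the resolvent average via $J=\sum_i\lambda_iJ_{A_{e_i}}$, is precisely that verification. The preliminary check that $\|\bar e\|\le 1$ is a nice touch that the paper leaves implicit.
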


\section*{Acknowledgments}
HHB was partially supported by the Natural Sciences and
Engineering Research Council of Canada.
WMM was partially supported by the Natural Sciences and
Engineering Research Council of Canada Postdoctoral Fellowship.

\end{document}